\newcommand {\Real}{\ensuremath{{\mathbb{R}}}}
\newcommand {\Natural}{\ensuremath{{\mathbb{N}}}}
\newcommand {\Complex}{\ensuremath{{\mathbb{C}}}}
\newcommand{\R}{\ensuremath{\mathcal R}}
\newcommand{\setS}{\ensuremath{\mathcal S}}
\newcommand{\U}{\ensuremath{\mathcal U}}
\newcommand{\X}{\ensuremath{\mathcal X}}
\newcommand{\N}{\ensuremath{\mathcal N}}
\newcommand{\nal}{{\mbox{\rm null}}}
\newcommand{\xhat}{\hat x}
\newtheorem{theorem}{Theorem}
\newtheorem{algorithm}{Algorithm}
\newtheorem{corollary}{Corollary}
\newtheorem{lemma}{Lemma}
\newtheorem{definition}{Definition}
\newtheorem{remark}{Remark}
\newtheorem{assumption}{Assumption}
\newtheorem{proposition}{Proposition}
\newenvironment{proof}{\noindent {\bf Proof.}}{\hfill \hspace*{1pt}\hfill$\blacksquare$}
\begin{document}

\title{Deadbeat control: construction via sets}
\author{S. Emre Tuna\footnote{Author is with Department of
Electrical and Electronics Engineering, Middle East Technical
University, 06800 Ankara, Turkey. Email: {\tt
tuna@eee.metu.edu.tr}}} \maketitle

\begin{abstract}
A geometric generalization of the discrete-time linear deadbeat
control problem is studied. The proposed method to generate a
deadbeat tracker for a given nonlinear system is constructive and
makes use of sets that can be computed iteratively. For
demonstration, derivations of deadbeat feedback law and tracker
dynamics are provided for various example systems. Based on the
method, a simple algorithm that computes the deadbeat gain for a
linear system with scalar input is given.
\end{abstract}

\section{Introduction}

Deadbeat control (regulation) problem for the discrete-time linear
system
\begin{eqnarray}\label{eqn:standard}
\xhat^{+}=A\xhat+Bu
\end{eqnarray}
concerns with finding a (linear) feedback law $u=-K\xhat$ such
that any solution of the closed-loop system hits the origin in
finite time. Thanks to linearity, the same feedback gain $K$ can
be used to make the solution $\xhat(\cdot)$ of
system~\eqref{eqn:standard} (exactly) converge to the solution
$x(\cdot)$ of the autonomous system $x^{+}=Ax$ in finite time.
Note that this time $u=K(x-\xhat)$. The problem being solved in
this case is deadbeat tracking. Though deadbeat regulation and
deadbeat tracking are equivalent problems for linear systems, the
latter subsumes the former when the systems are nonlinear. In this
paper we interest ourselves with the nonlinear deadbeat tracking
problem. Namely, given two systems, one of them autonomous and the
other with a control input, we attempt to find a method to
generate a feedback law to couple two systems so that their
solutions become equal after some finite time. Our point of
departure for generalization however is not \eqref{eqn:standard}
but a slight modification of it. Namely,
\begin{eqnarray}\label{eqn:sensible}
\xhat^{+}=A(\xhat+Bu)
\end{eqnarray}
which is the form\footnote{Forms \eqref{eqn:standard} and
\eqref{eqn:sensible} are equivalent from the deadbeat control
point of view. See Remark~\ref{rem:equivalent}.} we adopt for
generalization. The main reason is that the tools we use in our
analysis suggest \eqref{eqn:sensible} as a more natural choice.

Deadbeat control theory for linear systems is acknowledged to have
been well-established \cite{oreilly81,kucera84}. Different
formulations provided different techniques to compute the deadbeat
feedback gain \cite{franklin82,lewis82,sugimoto93}. As for
discrete-time nonlinear systems, the problem seems to have
attracted fewer researchers. Among the cases being studied are
bilinear systems \cite{grasselli80}, polynomial systems
\cite{nesic98a,nesic98b}, and, as a subclass of the latter,
Wiener-Hammerstein systems \cite{nesic99}.

The toy example that we keep in the back of our mind while we
attempt to reach a generalization is the simple case where $A$ is
a rotation matrix in $\Real^{2}$
\begin{eqnarray*}
A =\left[\!\!\begin{array}{rr} \cos\theta &\sin\theta\\
-\sin\theta &\cos\theta
\end{array}\!\!\right]
\end{eqnarray*}
with angle of rotation $\theta$ different from $0$ and $\pi$.
Letting $B=[1\ \ 0]^{T}$, the deadbeat tracker turns out to be
\begin{eqnarray*}
\xhat^{+}=A(\xhat+B[1\ \ -\cot\theta](x-\xhat))
\end{eqnarray*}
Now we state the key observation in this paper: The term in
brackets is the intersection of two equivalence classes (sometimes
called congruence classes \cite{lax96}). Namely,
\begin{eqnarray*}
\xhat+B[1\ \ -\cot\theta](x-\xhat)=(\xhat+{\rm
range}(B))\cap(x+A^{-1}{\rm range}(B))
\end{eqnarray*}
as shown in Fig.~\ref{fig:intersect}.
\begin{figure}[h]
\begin{center}
\includegraphics[scale=0.6]{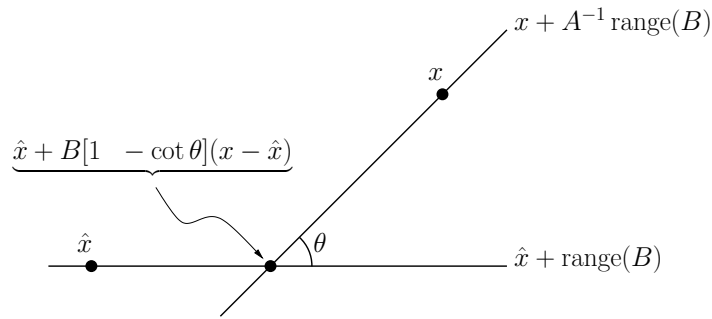}
\caption{Intersection of two equivalence
classes.}\label{fig:intersect}
\end{center}
\end{figure}
Based on this observation, the contribution of this paper is
intended to be in showing that such equivalence classes can be
defined even for nonlinear systems of arbitrary order, which in
turn allows one to construct deadbeat feedback laws and hence
deadbeat trackers provided that certain conditions
(Assumption~\ref{assume:X} and Assumption~\ref{assume:invariance})
hold. We now note and later demonstrate that when the system is
linear those assumptions are minimal for a deadbeat observer to
exist. We note that the approach in this paper is the {\em dual}
of the approach adopted in \cite{tuna11}.

The remainder of the paper is organized as follows. Next section
contains some preliminary material. In Section~\ref{sec:def} we
give the formal problem definition. Section~\ref{sec:sets} is
where we describe the sets that we use in construction of the
deadbeat tracker. We state and prove the main result in
Section~\ref{sec:main}. We provide examples in
Section~\ref{sec:ex}, where we construct deadbeat observers for
two different third order systems. In Section~\ref{sec:alg} we
present an algorithm to compute the deadbeat control gain for a
linear system with scalar input.

\section{Preliminaries}

Identity matrix is denoted by $I$. Null space and range space of a
matrix $M\in\Real^{m\times n}$ are denoted by $\N(M)$ and $\R(M)$,
respectively. Given map $f:\X\to\X$, $f^{-1}(\cdot)$ denotes the
{\em inverse} map in the general sense that for $\setS\subset\X$,
$f^{-1}(\setS)$ is the set of all $x\in\X$ satisfying
$f(x)\in\setS$. That is, we will not need $f$ be bijective when
talking about its inverse. Linear map $x\mapsto Ax$ will not be
exempt from this notation. Therefore, unless otherwise stated, the
reader should not assume that $A$ is a nonsingular matrix when we
write $A^{-1}$. The set of nonnegative integers is denoted by
$\Natural$ and $\Real_{>0}$ denotes the set of strictly positive
real numbers.

\section{Problem definition}\label{sec:def}

Given maps $f:\X\to\X$ and  $\mu:\X\times\U\to\X$, consider the
following discrete-time system
\begin{eqnarray}\label{eqn:system}
\xhat^{+}&=&f(\mu(\xhat,\,u))
\end{eqnarray}
where $\xhat\in\X\subset\Real^{n}$ is the {\em state} and
$u\in\U\subset\Real^{m}$ is the {\em (control)  input}. Map $\mu$
is assumed to satisfy
\begin{eqnarray}\label{eqn:obvious}
x\in\mu(x,\,\U)\qquad\forall x\in\X\,.
\end{eqnarray}
Notation $\xhat^{+}$ denotes the state at the next time instant.
The goal is to make system~\eqref{eqn:system} (by choosing proper
input values) follow the autonomous system $x^{+}=f(x)$ in
deadbeat fashion. We suppose that we have access to the full state
information $x$ of the autonomous system. Then the problem becomes
to construct some feedback law
$\kappa:\X\times\X\rightrightarrows\U$ such that the states of the
below coupled systems
\begin{subeqnarray}\label{eqn:cascade}
x^{+}&=&f(x)\\
\xhat^{+}&\in&f(\mu(\xhat,\,\kappa(\xhat,\,x)))
\end{subeqnarray}
converge to each other in finite time. The {\em solution} of
system~(\ref{eqn:cascade}a) at time $k\in\Natural$, having started
at initial condition $x\in\X$, is denoted by $\phi(k,\,x)$. Note
that $\phi(0,\,x)=x$ and $\phi(k+1,\,x)=f(\phi(k,\,x))$ for all
$x$ and $k$. {\em A} solution of system~(\ref{eqn:cascade}b) is
denoted by $\psi(k,\,\xhat,\,x)$. For the formal problem
description we need the definition below.

\begin{definition}
Map $\kappa:\X\times\X\rightrightarrows\U$ is said to be a {\em
deadbeat feedback law for system~\eqref{eqn:system}} if there
exists $p\geq 1$ such that all solutions of coupled
systems~\eqref{eqn:cascade} satisfy
\begin{eqnarray*}
\psi(k,\,\xhat,\,x)=\phi(k,\,x)
\end{eqnarray*}
for all $x,\,\xhat\in\X$ and $k\geq p$. System~{\em
(\ref{eqn:cascade}b)} then is said to be a {\em deadbeat tracker}.
\end{definition}

\begin{definition}
System~\eqref{eqn:system} is said to be {\em deadbeat
controllable} if there exists a deadbeat feedback law for it.
\end{definition}

In this paper we present a procedure to construct a deadbeat
tracker from system~\eqref{eqn:system} provided that certain
conditions (Assumption~\ref{assume:X} and
Assumption~\ref{assume:invariance}) hold. Our construction will
make use of some sets, which we define in the next section. Before
moving on into the next section, however, we choose to remind the
reader of a standard fact regarding the controllability of linear
systems. Then we provide a Lemma~\ref{lem:subspace} as a geometric
equivalent of that well-known result. Lemma~\ref{lem:subspace}
will find use later when we attempt to interpret and display the
generality of the assumptions we will have made.

The following criterion, known as Popov-Belevitch-Hautus (PBH)
test, is an elegant tool for checking (deadbeat) controllability.

\begin{proposition}[PBH test]
System~\eqref{eqn:standard} with $A\in\Real^{n\times n}$ and
$B\in\Real^{n\times m}$ is deadbeat controllable if and only if
\begin{eqnarray}\label{eqn:PBH}
{\rm rank}\left[A-\lambda I\ \ B\right]=n\quad \mbox{for all}\quad
\lambda\neq 0
\end{eqnarray}
where $\lambda$ is a complex scalar.
\end{proposition}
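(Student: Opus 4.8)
The plan is to reduce the claim to the eigenstructure of the closed‑loop matrix. First I would record the reduction: for system \eqref{eqn:standard}, deadbeat controllability (in the sense recalled in the Introduction) is equivalent to the existence of a gain $K\in\Real^{m\times n}$ for which $A-BK$ is nilpotent, since with $u=-K\xhat$ every solution is $\xhat(\cdot)=(A-BK)^{k}\xhat(0)$, and this is eventually zero for all initial conditions exactly when some power of $A-BK$ vanishes, i.e. when $0$ is the only eigenvalue of $A-BK$. With this in hand the proof splits into two implications.

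For necessity I would argue by contraposition. Suppose $\mathrm{rank}[A-\lambda I\ \ B]<n$ for some $\lambda\neq 0$. Then the $n$ rows of $[A-\lambda I\ \ B]$ are linearly dependent, so there is a nonzero row vector $w\in\Complex^{1\times n}$ with $w(A-\lambda I)=0$ and $wB=0$. Hence, for \emph{every} gain $K$, $w(A-BK)=wA-(wB)K=\lambda w$, so $\lambda$ is an eigenvalue of $A-BK$; since $\lambda\neq 0$, no $K$ renders $A-BK$ nilpotent, and therefore \eqref{eqn:standard} is not deadbeat controllable.

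For sufficiency I would invoke the Kalman controllability decomposition: there is an invertible $T$ with $T^{-1}AT=\left[\begin{smallmatrix}A_{c}&A_{12}\\0&A_{uc}\end{smallmatrix}\right]$ and $T^{-1}B=\left[\begin{smallmatrix}B_{c}\\0\end{smallmatrix}\right]$, where $(A_{c},B_{c})$ is controllable and the eigenvalues of $A_{uc}$ are precisely those $\lambda$ for which $\mathrm{rank}[A-\lambda I\ \ B]<n$. Under the PBH hypothesis every such $\lambda$ equals $0$, so $A_{uc}$ is nilpotent. By the pole‑placement theorem applied to the controllable pair $(A_{c},B_{c})$, pick $K_{c}$ so that the characteristic polynomial of $A_{c}-B_{c}K_{c}$ is $\lambda^{n_{c}}$ (hence $A_{c}-B_{c}K_{c}$ nilpotent), and set $K:=[K_{c}\ \ 0]\,T^{-1}$. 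Then $T^{-1}(A-BK)T=\left[\begin{smallmatrix}A_{c}-B_{c}K_{c}&A_{12}\\0&A_{uc}\end{smallmatrix}\right]$ is block upper triangular with characteristic polynomial $\lambda^{n}$, so $A-BK$ is nilpotent and $u=-K\xhat$ is a deadbeat feedback law.

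The main obstacle is the sufficiency direction, which leans on two classical but nontrivial ingredients — the Kalman decomposition and the fact that a controllable pair can be made nilpotent by state feedback — which I would cite rather than reprove. Everything else is routine linear algebra: invariance of $\mathrm{rank}[A-\lambda I\ \ B]$ under the similarity $T$ together with the identification of $\sigma(A_{uc})$ with the PBH‑deficient $\lambda$'s, and the observation that a block‑triangular matrix whose diagonal blocks are nilpotent is itself nilpotent (its characteristic polynomial is $\lambda^{n}$, so Cayley--Hamilton finishes it).
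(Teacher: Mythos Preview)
Your argument is correct: the reduction to nilpotency of $A-BK$, the left-eigenvector obstruction for necessity, and the Kalman decomposition plus pole placement for sufficiency are all standard and sound, and the block-triangular nilpotency remark closes the loop.

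However, there is nothing to compare against: the paper does not prove this proposition. It is stated as the classical Popov--Belevitch--Hautus test and used as a known fact, with the paper's own work beginning at Lemma~\ref{lem:subspace} (the geometric reformulation). So your write-up supplies a proof where the paper simply cites the result; in that sense the approaches are not ``different'' so much as the paper has no approach at all here. If anything, the paper's subsequent Lemma~\ref{lem:subspace} shows that it prefers the geometric/subspace viewpoint over the eigenvector/decomposition viewpoint you take, but for the PBH test itself it offers no argument.
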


\begin{remark}\label{rem:equivalent}
From PBH test it readily follows that system~\eqref{eqn:sensible}
is deadbeat controllable if and only if
system~\eqref{eqn:standard} is deadbeat controllable. In
particular, if $K\in\Real^{m\times n}$ is a deadbeat feedback gain
for system~\eqref{eqn:sensible} then $KA$ is a deadbeat feedback
gain for system~\eqref{eqn:standard}.
\end{remark}

The below result is a geometric equivalent of PBH test
\cite{eldem94,mullis72}.

\begin{lemma}\label{lem:subspace}
Given $A\in\Real^{n\times n}$ and $B\in\Real^{n\times m}$, let
subspace $\setS_{-k}$ of $\Real^{n}$ be defined as
$\setS_{-k-1}:=A^{-1}\setS_{-k}+\setS_{0}$ for $k=0,\,1,\,\ldots$
with $\setS_{0}:=\R(B)$. Then system~\eqref{eqn:sensible} is
deadbeat controllable if and only if
\begin{eqnarray}\label{eqn:SET}
\setS_{-n}=\Real^{n}\,.
\end{eqnarray}
\end{lemma}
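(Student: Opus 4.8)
The plan is to connect the geometric condition $\setS_{-n} = \Real^n$ to the PBH test, thereby reducing Lemma~\ref{lem:subspace} to the already-stated PBH Proposition. First I would observe that each $\setS_{-k}$ admits a closed form: unwinding the recursion $\setS_{-k-1} = A^{-1}\setS_{-k} + \setS_0$ gives
\begin{eqnarray*}
\setS_{-k} = \R(B) + A^{-1}\R(B) + \cdots + A^{-k}\R(B)\,,
\end{eqnarray*}
where $A^{-j}\R(B)$ denotes the full preimage $\{x : A^j x \in \R(B)\}$ in the sense fixed in the Preliminaries (no invertibility assumed). This identity is proved by an easy induction on $k$, the only subtlety being that $A^{-1}(\setU + \V) = A^{-1}\setU + A^{-1}\V$ may fail for preimages; instead one uses only the monotone inclusion $A^{-1}(\setU+\V) \supseteq A^{-1}\setU + A^{-1}\setV$ together with $A^{-1}\setS_0 + \setS_0 \supseteq$ everything needed, so some care in bookkeeping is required. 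In fact it is cleaner to take the displayed sum as the \emph{definition} of an auxiliary sequence $\widetilde{\setS}_{-k}$, verify it satisfies the same recursion and initial condition, and invoke uniqueness.

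Next I would establish that the chain $\setS_0 \subseteq \setS_{-1} \subseteq \setS_{-2} \subseteq \cdots$ stabilizes, and that once two consecutive terms coincide, all later terms coincide; since each is a subspace of $\Real^n$, the chain must be constant from index $n$ onward, so $\setS_{-n} = \setS_{-k}$ for every $k \geq n$, i.e. $\setS_{-n} = \bigcup_{k\geq 0} \setS_{-k} =: \setS_{-\infty}$. The stabilization-implies-eventually-constant step uses the recursion: if $\setS_{-k-1} = \setS_{-k}$ then $\setS_{-k-2} = A^{-1}\setS_{-k-1} + \setS_0 = A^{-1}\setS_{-k} + \setS_0 = \setS_{-k-1}$. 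Hence $\setS_{-n} = \Real^n$ if and only if $\setS_{-\infty} = \Real^n$, if and only if $\sum_{j\geq 0} A^{-j}\R(B) = \Real^n$.

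The heart of the argument is then the equivalence
\begin{eqnarray*}
\sum_{j\geq 0} A^{-j}\R(B) = \Real^n \quad\Longleftrightarrow\quad {\rm rank}[A-\lambda I\ \ B] = n \ \text{ for all } \lambda \neq 0\,.
\end{eqnarray*}
I would prove the contrapositive in both directions by passing to orthogonal complements: $\sum_{j\geq 0} A^{-j}\R(B) \neq \Real^n$ iff there is a nonzero $v$ with $v \perp A^{-j}\R(B)$ for all $j\geq 0$. Working out what $v \perp A^{-j}\R(B)$ means — $v$ annihilates $\{x : A^j x \in \R(B)\}$, equivalently (taking annihilators and using $(\ker C)^\perp = \R(C^\top)$ type identities applied to the composite map $x \mapsto (\text{projection of } A^j x \text{ onto } \R(B)^\perp)$) $v \in (A^\top)^j \R(B)^{\perp\!\!-}$ in a suitable sense — one shows the set of such $v$ is exactly the largest $A^\top$-invariant subspace contained in $\R(B)^\perp$, intersected appropriately, and is nonzero iff $A^\top$ (equivalently $A$) has an eigenvector $v$ with $B^\top v = 0$ and eigenvalue $\lambda \neq 0$; the $\lambda \neq 0$ restriction is precisely what the \emph{inverse}-image (rather than forward-image) construction buys us, since the $\lambda = 0$ modes are absorbed once and never contribute to the preimage chain. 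That eigenvector condition is exactly the failure of the PBH rank condition for some $\lambda \neq 0$. I expect this last equivalence — correctly tracking the role of preimages so that the zero eigenvalue is automatically excluded, rather than forward images where it would have to be excluded by hand — to be the main obstacle; the rest is routine linear algebra. Combining the three steps with the PBH Proposition gives the claim. $\blacksquare$
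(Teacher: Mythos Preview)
Your approach is essentially the paper's: both pass to orthogonal complements via the identity $(A^{-1}\setS)^{\perp} = A^{T}\setS^{\perp}$, obtain the dual recursion $\setS_{-k-1}^{\perp} = A^{T}\setS_{-k}^{\perp}\cap\setS_{0}^{\perp}$, use monotonicity and stabilization of the chain, and then extract a left eigenvector of $A$ to connect with PBH. The closed form $\setS_{-k}=\sum_{j=0}^{k}A^{-j}\R(B)$ is asserted in the paper as well, and in both arguments it is used only to obtain $\setS_{-k-1}\supset\setS_{-k}$, which, as you note, already follows directly from the recursion.

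The one place your sketch needs sharpening is the description of the limiting orthogonal complement. It is \emph{not} the largest $A^{T}$-invariant subspace of $\R(B)^{\perp}$: that object is (the annihilator of) the ordinary controllable subspace, and its being nonzero detects failure of PBH for \emph{some} $\lambda$, including $\lambda=0$. What the paper does instead is this: once the chain stabilizes at some index $\ell$, the dual recursion gives
\[
\setS_{-\ell}^{\perp}=A^{T}\setS_{-\ell}^{\perp}\cap\setS_{0}^{\perp}\subset A^{T}\setS_{-\ell}^{\perp},
\]
and since $\dim A^{T}W\le\dim W$ for any subspace $W$, this inclusion forces the \emph{equality} $A^{T}\setS_{-\ell}^{\perp}=\setS_{-\ell}^{\perp}$. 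Hence $A^{T}$ restricted to $\setS_{-\ell}^{\perp}$ is a bijection, so every eigenvalue on that subspace is nonzero, and any eigenvector it furnishes witnesses failure of PBH with $\lambda\neq 0$. That equality --- not mere $A^{T}$-invariance --- is the concrete mechanism behind your (correct) intuition that ``the inverse-image construction buys us'' the exclusion of $\lambda=0$. The paper also works over $\Complex$ to guarantee an eigenvector exists; you will need to do the same.
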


\begin{proof}
For simplicity we provide the demonstration for the case where
each $\setS_{-k}$ is a subspace of $\Complex^{n}$ (over field
$\Complex$). The case $\setS_{-k}\subset\Real^{n}$ is a little
longer to prove yet it is true.

We first show \eqref{eqn:SET}$\implies$\eqref{eqn:PBH}. Suppose
\eqref{eqn:PBH} fails. That is, there exists a (left) eigenvector
$w\in\Complex^{n}$ and a nonzero eigenvalue $\lambda\in\Complex$
such that $w^{T}A=\lambda w^{T}$ and $w^{T}B=0$. Now suppose for
some $k$ we have $w\perp\setS_{-k}$. That is, $w^{T}v=0$ for all
$v\in\setS_{-k}$. We claim that $w\perp A^{-1}\setS_{-k}$. Suppose
not. Then one can find $v\in A^{-1}\setS_{-k}$ such that
\begin{eqnarray}\label{eqn:tobecontradicted}
w^{T}v\neq 0\,.
\end{eqnarray}
Also, since $Av\in\setS_{-k}$ we can write
\begin{eqnarray*}
0
&=&w^{T}Av\\
&=&\lambda w^{T}v
\end{eqnarray*}
which contradicts \eqref{eqn:tobecontradicted} since $\lambda\neq
0$. Hence our claim holds. Moreover, since $w^{T}B=0$, we can
write $w\perp\setS_{0}$. Consequently, $w\perp
A^{-1}\setS_{-k}+\setS_{0}=\setS_{-k-1}$. We have established
therefore
\begin{eqnarray}\label{eqn:toimply}
w\perp\setS_{-k}\implies w\perp\setS_{-k-1}\,.
\end{eqnarray}
Recall that $w\perp\setS_{0}$. That means by \eqref{eqn:toimply}
that $w\perp\setS_{-k}$ for all $k$. Hence \eqref{eqn:SET} fails.

Now we demonstrate the other direction
\eqref{eqn:PBH}$\implies$\eqref{eqn:SET}. Note first that for any
subspace $\setS$ we can write
$(A^{-1}\setS)^{\perp}=A^{T}\setS^{\perp}$. Therefore equation
$\setS_{-k-1}=A^{-1}\setS_{-k}+\setS_{0}$ yields
\begin{eqnarray}\label{eqn:dual}
\setS_{-k-1}^{\perp}=A^{T}\setS_{-k}^{\perp}\cap\setS_{0}^{\perp}\,.
\end{eqnarray}
Then, since
\begin{eqnarray*}
\setS_{-k}=\R(B)+A^{-1}\R(B)+A^{-2}\R(B)+\ldots+A^{-k}\R(B)
\end{eqnarray*}
we have $\setS_{-k-1}\supset\setS_{-k}$. As a result,
$\dim\setS_{-k-1}\geq\dim\setS_{-k}$ for all $k$. Let us now
suppose \eqref{eqn:SET} fails. That means $\dim\setS_{-n}\leq
n-1$, which implies that there exists
$\ell\in\{0,\,1,\,\ldots,\,n-1\}$ such that
$\dim\setS_{-\ell-1}=\dim\setS_{-\ell}\leq n-1$. Since
$\setS_{-\ell-1}\supset\setS_{-\ell}$, both $\setS_{-\ell-1}$ and
$\setS_{-\ell}$ having the same dimension implies
$\setS_{-\ell-1}=\setS_{-\ell}$. By \eqref{eqn:dual} we can
therefore write
$\setS_{-\ell}^{\perp}=A^{T}\setS_{-\ell}^{\perp}\cap\setS_{0}^{\perp}$,
which implies $\setS_{-\ell}^{\perp}\subset
A^{T}\setS_{-\ell}^{\perp}$.  Since
$\dim\setS_{-\ell}^{\perp}\geq\dim A^{T}\setS_{-\ell}^{\perp}$ we
deduce that $\setS_{-\ell}^{\perp}=A^{T}\setS_{-\ell}^{\perp}$.
Recall that $\dim\setS_{-\ell}\leq n-1$. Therefore
$\dim\setS_{-\ell}^{\perp}\geq 1$. Then equality
$\setS_{-\ell}^{\perp}=A^{T}\setS_{-\ell}^{\perp}$ implies that
there exists an eigenvector $w\in\setS_{-\ell}^{\perp}$ and a
nonzero eigenvalue $\lambda\in\Complex$ such that $w^{T}A=\lambda
w^{T}$. Note also that $w^{T}B=0$ because
$\setS_{-\ell}^{\perp}\subset\setS_{0}^{\perp}=\N(B^{T})$. Hence
\eqref{eqn:PBH} fails.
\end{proof}

\begin{remark}\label{rem:dimension}
It is clear from the proof that if \eqref{eqn:SET} fails then
$\dim\setS_{-k}\leq n-1$ for all $k$.
\end{remark}

\section{Sets}\label{sec:sets}
In this section we define certain sets (more formally, {\em
equivalence classes}) associated with system~\eqref{eqn:system}.
For $x\in\X$ we define
\begin{eqnarray*}
[x]_{0}:=\mu(x,\,\U)\,.
\end{eqnarray*}
Note that for system~\eqref{eqn:sensible} we have
$\mu(x,\,u)=x+Bu$ and $[x]_{0}=x+\R(B)$. We then let for
$k=0,\,1,\,\ldots$
\begin{eqnarray*}
[x]_{-k-1}:=\mu([x]_{-k}^{-},\,\U)
\end{eqnarray*}
where
\begin{eqnarray*}
[x]_{-k}^{-}:=f^{-1}([f(x)]_{-k})\,.
\end{eqnarray*}

\begin{remark}\label{rem:subset}
Note that $[x]_{-k-1}\supset[x]_{-k}$ and
$[x]^{-}_{-k-1}\supset[x]^{-}_{-k}$ for all $x$ and $k$.
\end{remark}

The following two assumptions will be invoked in our main theorem.
In hope of making them appear somewhat meaningful and revealing
their generality we provide the conditions that they would boil
down to for linear systems.

\begin{assumption}\label{assume:X}
There exists $p\geq 1$ such that $[x]_{1-p}=\X$ for all $x\in\X$.
\end{assumption}

Assumption~\ref{assume:X} is equivalent to deadbeat
controllability for linear systems. Below result formalizes this.

\begin{theorem}
Linear system~\eqref{eqn:sensible} is deadbeat controllable if and
only if Assumption~\ref{assume:X} holds.
\end{theorem}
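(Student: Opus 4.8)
The plan is to reduce Assumption~\ref{assume:X} for the linear system~\eqref{eqn:sensible} (with $\X=\Real^{n}$, $\U=\Real^{m}$) to the subspace condition~\eqref{eqn:SET} and then invoke Lemma~\ref{lem:subspace}. The first step is to compute the sets $[x]_{-k}$ explicitly in the linear case. Here $\mu(x,\,u)=x+Bu$, so $\mu(W,\,\U)=W+\R(B)$ for any $W\subset\Real^{n}$, and $f(x)=Ax$, so for an affine set $x+W$ the set-theoretic preimage is $f^{-1}(Ax+W)=x+A^{-1}W$. Starting from $[x]_{0}=x+\R(B)=x+\setS_{0}$, an induction using these two identities gives $[x]_{-k}=x+\setS_{-k}$ for every $k\geq 0$, with $\setS_{-k}$ the subspace chain of Lemma~\ref{lem:subspace}: if $[y]_{-k}=y+\setS_{-k}$ for all $y$, then $[f(x)]_{-k}=Ax+\setS_{-k}$, hence $[x]_{-k}^{-}=f^{-1}(Ax+\setS_{-k})=x+A^{-1}\setS_{-k}$, and therefore $[x]_{-k-1}=x+A^{-1}\setS_{-k}+\setS_{0}=x+\setS_{-k-1}$.

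With this identification in hand, Assumption~\ref{assume:X} reads: there exists $p\geq 1$ such that $x+\setS_{1-p}=\Real^{n}$ for all $x$, i.e., there exists $k=p-1\geq 0$ with $\setS_{-k}=\Real^{n}$. It then remains to show that this is equivalent to $\setS_{-n}=\Real^{n}$. One direction is immediate (take $k=n$). For the converse, recall from the proof of Lemma~\ref{lem:subspace} that the chain is nondecreasing, $\setS_{-k-1}\supset\setS_{-k}$, and from Remark~\ref{rem:dimension} that if~\eqref{eqn:SET} fails then $\dim\setS_{-k}\leq n-1$ for all $k$; hence, in particular, $\setS_{-k}\neq\Real^{n}$ for all $k$ whenever $\setS_{-n}\neq\Real^{n}$. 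Thus $\setS_{-k}=\Real^{n}$ for some $k\geq 0$ forces $\setS_{-n}=\Real^{n}$. Combining the two reductions, Assumption~\ref{assume:X} holds if and only if~\eqref{eqn:SET} holds, which by Lemma~\ref{lem:subspace} is exactly deadbeat controllability of~\eqref{eqn:sensible}.

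I expect the only delicate point to be the bookkeeping in the first step: one must be careful that $f^{-1}$ denotes the set-theoretic preimage (so $A$ need not be invertible), and apply the affine-set identities $\mu(W,\,\U)=W+\R(B)$ and $f^{-1}(Ax+W)=x+A^{-1}W$ consistently. Once $[x]_{-k}=x+\setS_{-k}$ is established, the rest is the short dimension-monotonicity argument already contained in Lemma~\ref{lem:subspace} and Remark~\ref{rem:dimension}.
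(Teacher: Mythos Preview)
Your proposal is correct and follows essentially the same route as the paper: establish by induction that $[x]_{-k}=x+\setS_{-k}$ for the subspace chain of Lemma~\ref{lem:subspace}, and then conclude via Lemma~\ref{lem:subspace} and Remark~\ref{rem:dimension}. Your handling of the preimage step is slightly cleaner than the paper's (you go directly to $f^{-1}(Ax+W)=x+A^{-1}W$, while the paper splits $A^{-1}(Ax+\setS_{-k})$ and then absorbs the $\N(A)$ term using $\N(A)\subset A^{-1}\setS_{-k}$), but the argument is otherwise the same.
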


\begin{proof}
Let $\setS_{-k}$ for $k=0,\,1,\,\ldots$ be defined as in
Lemma~\ref{lem:subspace}. We claim the following.
\begin{eqnarray*}
[x]_{-k}=x+\setS_{-k}\implies[x]_{-k-1}=x+\setS_{-k-1}\,.
\end{eqnarray*}
To see that we write
\begin{eqnarray*}
[x]_{-k-1}
&=& A^{-1}[Ax]_{-k}+\setS_{0}\\
&=& A^{-1}(Ax+\setS_{-k})+\setS_{0}\\
&=& A^{-1}Ax+A^{-1}\setS_{-k}+\setS_{0}\\
&=& x+\N(A)+A^{-1}\setS_{-k}+\setS_{0}\\
&=& x+A^{-1}\setS_{-k}+\setS_{0}\\
&=& x+\setS_{-k-1}
\end{eqnarray*}
where we used the fact $A^{-1}\setS\supset\N(A)$ for any subspace
$\setS$. Hence our claim holds. Note that $[x]_{0}=x+\setS_{0}$.
Therefore, by induction, $[x]_{-k}=x+\setS_{-k}$ for all $k$.

Now suppose that the system is deadbeat controllable. Then, since
$[x]_{-k}=x+\setS_{-k}$, we see that Assumption~\ref{assume:X}
holds with $p=n+1$ thanks to Lemma~\ref{lem:subspace}. If however
the system is not deadbeat controllable, then by
Remark~\ref{rem:dimension} $\dim \setS_{-k}\leq n-1$ for all $k$.
Hence Assumption~\ref{assume:X} must fail.
\end{proof}

\begin{assumption}\label{assume:invariance}
$\xhat\in[x]_{0}$ implies $[\xhat]_{0}=[x]_{0}$ for all
$x,\,\xhat\in\X$.
\end{assumption}

\begin{theorem}
Assumption~\ref{assume:invariance} comes for free for linear
system~\eqref{eqn:sensible}.
\end{theorem}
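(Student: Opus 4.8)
The plan is to unwind the definitions and reduce the claim to the elementary fact that two cosets of a linear subspace are either equal or disjoint. First I would recall what the relevant objects are for system~\eqref{eqn:sensible}: here $\mu(x,u)=x+Bu$, so, as already noted after the definition of $[x]_0$, we have $[x]_0=\mu(x,\U)=x+\R(B)$ for every $x$. Hence Assumption~\ref{assume:invariance} is, in this case, precisely the assertion that $\xhat\in x+\R(B)$ forces $\xhat+\R(B)=x+\R(B)$.

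Next I would carry out the (single-line) verification. If $\xhat\in[x]_0=x+\R(B)$ then $\xhat-x\in\R(B)$, and since $\R(B)$ is a linear subspace it is closed under translation by its own elements, so
\[
[\xhat]_0=\xhat+\R(B)=x+(\xhat-x)+\R(B)=x+\R(B)=[x]_0 .
\]
Equivalently, one may phrase it as: the relation $x\sim\xhat\iff\xhat-x\in\R(B)$ is an equivalence relation whose classes are exactly the cosets $x+\R(B)$, and distinct classes are disjoint, so membership in a class determines the class — which is literally Assumption~\ref{assume:invariance}.

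There is essentially no obstacle: the only property invoked is that $\R(B)$ is a subspace, which holds for any matrix $B$, with no nonsingularity of $A$ and no controllability hypothesis entering anywhere. I would close with a sentence making explicit that this is the sense in which the assumption ``comes for free'': unlike Assumption~\ref{assume:X}, whose validity was shown in the previous theorem to coincide with deadbeat controllability, Assumption~\ref{assume:invariance} holds unconditionally for every linear system of the form~\eqref{eqn:sensible}.
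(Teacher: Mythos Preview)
Your proof is correct and is exactly the elementary coset argument the paper has in mind; the paper's own proof is the single word ``Evident.'' You have simply spelled out the obvious verification that $\xhat\in x+\R(B)$ implies $\xhat+\R(B)=x+\R(B)$, which is all that is required.
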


\begin{proof}
Evident.
\end{proof}
\\

Last we let $[x]_{1}^{-}:=x$ and define map
$\pi:\X\times\X\to\{2-p,\,\ldots,\,-1,\,0,\,1\}$ as
\begin{eqnarray*}
\pi(\xhat,\,x) := \max\,\{2-p,\,\ldots,\,-1,\,0,\,1\}\quad
\mbox{subject to}\quad
[\xhat]_{0}\cap[x]^{-}_{\pi(\xhat,\,x)}\neq\emptyset
\end{eqnarray*}
where $p$ is as in Assumption~\ref{assume:X}.

\section{The result}\label{sec:main}

Below is our main theorem.

\begin{theorem}\label{thm:main}
Suppose Assumptions~\ref{assume:X}-\ref{assume:invariance} hold.
Then system
\begin{eqnarray*}
\hat{x}^{+}\in f([\xhat]_{0}\cap [x]^{-}_{\pi(\xhat,\,x)})
\end{eqnarray*}
is a deadbeat tracker.
\end{theorem}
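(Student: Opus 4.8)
The plan is to show two things: first, that the proposed tracker is well-defined (i.e., the intersection $[\xhat]_0 \cap [x]^-_{\pi(\xhat,x)}$ is always nonempty, so $\pi$ is well-defined and $f$ is applied to a nonempty set), and second, that along any solution the index $\pi$ strictly increases by one at each step until it reaches $1$, at which point the two states coincide and stay coincident. The key quantity to track is $\pi(\psi(k,\xhat,x),\phi(k,x))$ as a function of $k$.

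First I would establish well-definedness. By Assumption~\ref{assume:X}, $[x]^-_{1-p} \supset \cdots$ eventually equals $\X$ in an appropriate sense; more precisely, I would argue that $[x]^-_{2-p} = f^{-1}([f(x)]_{1-p}) = f^{-1}(\X) = \X$ using Assumption~\ref{assume:X} applied to $f(x)$, so that $[\xhat]_0 \cap [x]^-_{2-p} = [\xhat]_0 \neq \emptyset$ (nonempty because $\xhat \in [\xhat]_0$ by \eqref{eqn:obvious}). Hence the set over which the max defining $\pi$ is taken always contains $2-p$, so $\pi(\xhat,x)$ exists and lies in $\{2-p,\ldots,1\}$.

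Next, the core inductive step. Suppose at some time the tracker state is $\xhat$, the reference state is $x$, and $\pi(\xhat,x) = j$ for some $j \leq 0$. By definition of $\pi$, pick $\zi \in [\xhat]_0 \cap [x]^-_j$, and let $\xhat^+ := f(\zi)$, which is an allowed successor. I want to show $\pi(\xhat^+, f(x)) \geq j+1$, i.e., $[\xhat^+]_0 \cap [f(x)]^-_{j+1} \neq \emptyset$. The point $\xhat^+ = f(\zi) \in [\xhat^+]_0$. I must check $\xhat^+ \in [f(x)]^-_{j+1} = f^{-1}([f(f(x))]_{j+1}) = f^{-1}([f^2(x)]_{j+1})$, i.e., $f(\xhat^+) \in [f^2(x)]_{j+1}$. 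Now unwind the definition: $[f^2(x)]_{j+1} = \mu([f^2(x)]_j^-, \U)$ where $[f^2(x)]_j^- = f^{-1}([f^3(x)]_j)$ when $j\le 0$ — wait, I need the indices to telescope correctly, so I would instead use the cleaner characterization $[x]^-_{-k} = f^{-1}([f(x)]_{-k})$ together with $\zi \in [x]^-_j$ meaning $f(\zi) \in [f(x)]_{j}$ (for $j \le 0$), hence $\zi \in f^{-1}([f(x)]_j)$ and $f(\zi) \in [f(x)]_j \subset [f(x)]_{j+1}$ if needed; the precise bookkeeping showing that applying $f$ once shifts the reference index up by one is the technical heart. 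Here Assumption~\ref{assume:invariance} enters: since $\xhat^+ \in [\xhat^+]_0$ and also $\xhat^+ = f(\zi)$ with $\zi \in [\xhat]_0$, invariance lets me replace $[\xhat^+]_0$ by a class determined by $\zi$, matching up the membership requirements. The hard part will be getting these index shifts exactly right and invoking Assumption~\ref{assume:invariance} at the correct moment to align $[\xhat^+]_0$ with the appropriate congruence class of $[x]^-$.

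Finally, the termination argument. Since $\pi$ takes values in the finite set $\{2-p,\ldots,1\}$ and strictly increases (by at least $1$) at each step as long as it is $\le 0$, after at most $p-1$ steps we reach $\pi = 1$. When $\pi(\xhat,x) = 1$, by definition $[\xhat]_0 \cap [x]^-_1 = [\xhat]_0 \cap \{x\} \neq \emptyset$, forcing $x \in [\xhat]_0$, hence by Assumption~\ref{assume:invariance} $[\xhat]_0 = [x]_0 \ni x$; then the only constrained successor is $f(\zi)$ with $\zi \in [\xhat]_0 \cap \{x\} = \{x\}$, so $\zi = x$ and $\xhat^+ = f(x) = x^+$. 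Thus once the states agree they agree forever, and they agree by time $p$. Setting $p$ from Assumption~\ref{assume:X} as the constant in the deadbeat definition completes the proof. I would close by noting this gives $\psi(k,\xhat,x) = \phi(k,x)$ for all $k \geq p$.
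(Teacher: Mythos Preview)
Your strategy coincides with the paper's: both arguments track a one-step ``closeness index'' (you use $\pi(\xhat,x)$ directly, the paper uses membership $\xhat\in[x]_{-\ell-1}$) and show it improves by one at every step, with Assumption~\ref{assume:invariance} supplying the key class-swap. Your well-definedness and termination paragraphs are correct.

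The inductive step you leave as ``the hard part'' is exactly what the paper nails down, and it goes as follows. From $\zi\in[x]^-_{j}$ you get $\xhat^{+}=f(\zi)\in f(f^{-1}([f(x)]_{j}))\subset[f(x)]_{j}$. For $j\le 0$ one has $[f(x)]_{j}=\mu([f(x)]^{-}_{j+1},\U)$ (this also covers $j=0$ since $[f(x)]^{-}_{1}=\{f(x)\}$), so there exists $\eta\in[f(x)]^{-}_{j+1}$ with $\xhat^{+}\in\mu(\eta,\U)=[\eta]_{0}$. Assumption~\ref{assume:invariance} now gives $[\xhat^{+}]_{0}=[\eta]_{0}$, and \eqref{eqn:obvious} yields $\eta\in[\xhat^{+}]_{0}$. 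Hence $\eta\in[\xhat^{+}]_{0}\cap[f(x)]^{-}_{j+1}\neq\emptyset$, i.e.\ $\pi(\xhat^{+},f(x))\ge j+1$. Note this works for \emph{every} choice $\xhat^{+}\in f([\xhat]_{0}\cap[x]^{-}_{j})$, not just a particular $\zi$, so the set-valued dynamics are covered. Your attempt to show $\xhat^{+}\in[f(x)]^{-}_{j+1}$ directly was indeed off target; the correct object to place in that set is $\eta$, not $\xhat^{+}$.
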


\begin{proof}
We claim the following.
\begin{eqnarray}\label{eqn:claim}
\xhat\in[x]_{-\ell-1}\implies\xhat^{+}\in[f(x)]_{-\ell}
\end{eqnarray}
for all $\ell\in\{0,\,1,\,\ldots,\,p-2\}$. Let us prove our claim.
Note that $\xhat\in[x]_{-\ell-1}$ means
$\xhat\in\mu([x]_{-\ell}^{-},\,\U)$, which implies that there
exists
\begin{eqnarray}\label{eqn:eta1}
\eta\in[x]_{-\ell}^{-}
\end{eqnarray}
such that $\xhat\in\mu(\eta,\,\U)=[\eta]_{0}$. By
Assumption~\ref{assume:invariance} we have
$[\xhat]_{0}=[\eta]_{0}$. Then \eqref{eqn:obvious} yields
\begin{eqnarray}\label{eqn:eta2}
\eta\in[\xhat]_{0}\,.
\end{eqnarray}
From \eqref{eqn:eta1} and \eqref{eqn:eta2} we have
$[\xhat]_{0}\cap [x]^{-}_{-\ell}\neq\emptyset$. Therefore
$\pi(\xhat,\,x)\geq-\ell$. Employing Remark~\ref{rem:subset} we
can write
\begin{eqnarray*}
\hat{x}^{+}
&\in& f([\xhat]_{0}\cap [x]^{-}_{\pi(\xhat,\,x)})\\
&\subset& f([\xhat]_{0}\cap [x]^{-}_{-\ell})\\
&\subset& f([x]^{-}_{-\ell})\\
&=& f(f^{-1}([f(x)]_{-\ell}))\\
&\subset& [f(x)]_{-\ell}\,.
\end{eqnarray*}
Hence \eqref{eqn:claim} holds. By Assumption~\ref{assume:X} we
have $\xhat\in[x]_{1-p}$ for all $x,\,\xhat$. Therefore
\eqref{eqn:claim} and Remark~\ref{rem:subset} imply the existence
of $\ell^{*}\in\{0,\,1,\,\ldots,\,p-1\}$ such that
\begin{eqnarray}\label{eqn:tintin}
\psi(k,\,\xhat,\,x)\in[\phi(k,\,x)]_{0}
\end{eqnarray}
for all $k\geq\ell^{*}$. We can write
$\phi(k,\,x)\in[\psi(k,\,\xhat,\,x)]_{0}$ by \eqref{eqn:tintin}.
Consequently
\begin{eqnarray*}
\pi(\psi(k,\,\xhat,\,x),\,\phi(k,\,x))=1
\end{eqnarray*}
for all $k\geq\ell^{*}$. Then we deduce
$\psi(k,\,\xhat,\,x)=\phi(k,\,x)$ for all $k\geq p$.
\end{proof}

\begin{corollary}\label{cor:foralg}
Consider linear system~\eqref{eqn:sensible} with
$A\in\Real^{n\times n}$ and $B\in\Real^{n\times 1}$. Suppose
$(A,\,B)$ is a controllable\footnote{That is, $\mbox{rank}\ [B\
AB\ \ldots\ A^{n-1}B]=n$.} pair. Let $\setS_{-k}$ for
$k=0,\,1,\,\ldots$ be defined as in Lemma~\ref{lem:subspace}. Then
system
\begin{eqnarray*}
\xhat^{+}=A((\xhat+\setS_{0})\cap(x+A^{-1}\setS_{2-n}))
\end{eqnarray*}
is a deadbeat tracker.
\end{corollary}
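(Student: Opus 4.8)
\medskip
The plan is to derive this from Theorem~\ref{thm:main} by checking its hypotheses and then identifying the construction there with the one stated here. First I would record the linear bookkeeping. Since $(A,\,B)$ is controllable, ${\rm rank}\,[A-\lambda I\ \ B]=n$ for \emph{every} $\lambda\in\Complex$, so in particular \eqref{eqn:PBH} holds; by the PBH test and Remark~\ref{rem:equivalent}, system~\eqref{eqn:sensible} is deadbeat controllable. Hence Assumption~\ref{assume:X} holds (by the equivalence established above) and Assumption~\ref{assume:invariance} holds automatically. Moreover, as in the proof of that equivalence, $[x]_{-k}=x+\setS_{-k}$ for all $x$ and $k$, so $[x]^{-}_{-k}=f^{-1}([Ax]_{-k})=A^{-1}(Ax+\setS_{-k})=x+A^{-1}\setS_{-k}$ and $[x]^{-}_{1}=x$. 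Thus Theorem~\ref{thm:main} applies and yields the deadbeat tracker $\hat x^{+}\in f\big([\xhat]_{0}\cap[x]^{-}_{\pi(\xhat,\,x)}\big)$; what remains is to pin down $p$ and to show this coincides with $A\big((\xhat+\setS_{0})\cap(x+A^{-1}\setS_{2-n})\big)$.

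Next I would compute the $\setS_{-k}$ explicitly. Bringing $(A,\,B)$ into controllable canonical form (with $B=e_{n}$ and $A$ the companion matrix), a straightforward induction on the recursion $\setS_{-k}=A^{-1}\setS_{1-k}+\R(B)$ of Lemma~\ref{lem:subspace} gives $\setS_{-k}=\naps\{e_{1},\ldots,e_{k},e_{n}\}$ for $1\le k\le n-1$; since dimensions are similarity-invariant, $\dim\setS_{-k}=k+1$ for $0\le k\le n-1$ in the original coordinates too. In particular $\setS_{1-n}=\setS_{-(n-1)}=\Real^{n}$, so Assumption~\ref{assume:X} holds with $p=n$; with this $p$, the map $\pi$ takes values in $\{2-n,\ldots,1\}$ and the corollary's $x+A^{-1}\setS_{2-n}$ is precisely $[x]^{-}_{2-n}$.

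The crux is to show $(\xhat+\setS_{0})\cap(x+A^{-1}\setS_{2-n})$ is a single point for all $\xhat,\,x$, for which it suffices that $\setS_{0}\oplus A^{-1}\setS_{2-n}=\Real^{n}$. The sum is $A^{-1}\setS_{2-n}+\setS_{0}=\setS_{1-n}=\Real^{n}$. For directness, as $\dim\setS_{0}=\dim\R(B)=1$ it is enough that $AB\notin\setS_{2-n}$; in canonical form this is immediate, since $AB=Ae_{n}=e_{n-1}-a_{n-1}e_{n}\notin\naps\{e_{1},\ldots,e_{n-2},e_{n}\}=\setS_{2-n}$. (Invariantly: $AB\in\setS_{2-n}$ would give $\R(B)\subseteq A^{-1}\setS_{2-n}$, hence $A^{-1}\setS_{2-n}=\setS_{1-n}=\Real^{n}$, i.e.\ $\R(A)\subseteq\setS_{2-n}$, whence $\setS_{2-n}\supseteq\R(A)+\R(B)=\Real^{n}$, contradicting $\dim\setS_{2-n}=n-1$.) Since two affine subspaces parallel to complementary subspaces meet in exactly one point, $(\xhat+\setS_{0})\cap(x+A^{-1}\setS_{2-n})$ is a singleton; being nonempty, it forces $[\xhat]_{0}\cap[x]^{-}_{2-n}\neq\emptyset$, so $\pi(\xhat,\,x)\ge 2-n$.

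Finally, $\pi(\xhat,\,x)\ge 2-n$ gives $[x]^{-}_{\pi(\xhat,\,x)}\subseteq[x]^{-}_{2-n}$ by Remark~\ref{rem:subset}, so $[\xhat]_{0}\cap[x]^{-}_{\pi(\xhat,\,x)}$ is a nonempty subset of the singleton $[\xhat]_{0}\cap[x]^{-}_{2-n}=(\xhat+\setS_{0})\cap(x+A^{-1}\setS_{2-n})$, hence equals it. Applying $f$, the tracker delivered by Theorem~\ref{thm:main} is exactly $\hat x^{+}=A\big((\xhat+\setS_{0})\cap(x+A^{-1}\setS_{2-n})\big)$, which is therefore a deadbeat tracker. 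I expect the one genuinely computational step to be the dimension bookkeeping behind $\setS_{1-n}=\Real^{n}$ together with $\dim\setS_{2-n}=n-1$ (equivalently $AB\notin\setS_{2-n}$); once that is in hand everything else is routine, and the canonical-form calculation above seems the cleanest way to secure it.
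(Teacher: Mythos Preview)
Your argument is correct and follows the route the paper intends: the corollary is stated without proof immediately after Theorem~\ref{thm:main}, so it is meant to be read as a direct specialization of that theorem to the linear single-input case, which is exactly what you do. Your extra work---pinning down $p=n$ via the canonical-form computation of $\dim\setS_{-k}=k+1$, and then verifying that $(\xhat+\setS_{0})\cap(x+A^{-1}\setS_{2-n})$ is a singleton so that the set-valued tracker of Theorem~\ref{thm:main} collapses to the stated equality---fills in precisely the details the paper leaves implicit. The final step, observing that $[\xhat]_{0}\cap[x]^{-}_{\pi(\xhat,x)}$ is a nonempty subset of that singleton and hence equal to it, is the clean way to reconcile the $\pi$-dependent construction with the $\pi$-free formula of the corollary.
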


\section{Examples}\label{sec:ex}

Here, for two third order nonlinear systems, we construct deadbeat
trackers. In the first example we study a simple homogeneous
system and show that the construction yields a homogeneous
feedback law. Hence our method may be thought to be somewhat {\em
natural} in the vague sense that the tracker it generates inherits
certain intrinsic properties of the system. In the second example
we aim to provide a demonstration on tracker construction for a
system that resides in a state space different than $\Real^{n}$.

\subsection{Homogeneous system}

Consider system~\eqref{eqn:system} with
\begin{eqnarray*}
f(x):=\left[\!\!
\begin{array}{c}
-x_{2}\\
x_{1}+x_{3}^{1/3}\\
x_{2}^{3}+x_{3}
\end{array}\!\!\right]\quad\mbox{and}\quad \mu(x,\,u):=\left[\!\!
\begin{array}{c}
x_{1}\\
x_{2}\\
x_{3}+u^3
\end{array}\!\!\right]
\end{eqnarray*}
where $x=[x_{1}\ x_{2}\ x_{3}]^{T}$. Let $\X=\Real^{3}$ and
$\U=\Real$. If we let dilation $\Delta_{\lambda}$ be
\begin{eqnarray*}
\Delta_{\lambda}:=\left[\!\!
\begin{array}{ccc}
\lambda&0&0\\
0&\lambda&0\\
0&0&\lambda^{3}
\end{array}\!\!\right]
\end{eqnarray*}
with $\lambda\in\Real$, then we realize that
\begin{eqnarray*}
f(\Delta_{\lambda}x)=\Delta_{\lambda}f(x)\quad\mbox{and}\quad
\mu(\Delta_{\lambda}x,\,\lambda u)=\Delta_{\lambda}\mu(x,\,u)\,.
\end{eqnarray*}
That is, the system is homogeneous \cite{rinehart09} with respect
to dilation $\Delta$. Before describing the relevant sets
$[x]_{-k}$ and $[x]_{-k}^{-}$ we want to mention that $f$ is
bijective and its inverse is
\begin{eqnarray}\label{eqn:finv}
f^{-1}(x)=\left[\!\!
\begin{array}{c}
x_{2}-(x_{1}^{3}+x_{3})^{1/3}\\
-x_{1}\\
x_{1}^{3}+x_{3}
\end{array}\!\!\right]
\end{eqnarray}
Now we are ready to construct our sets. By definition
$[x]_{0}=\mu(x,\,\U)$. Therefore we can write
\begin{eqnarray*}
[x]_{0}=\left\{ \left[\!\!\begin{array}{c} x_{1}\\
x_{2}\\
\alpha^{3}
\end{array}\!\!\right]:\alpha\in\Real
\right\}
\end{eqnarray*}
By \eqref{eqn:finv} we can then proceed as
\begin{eqnarray*}
[x]_{0}^{-}&=&f^{-1}([f(x)]_{0})\\
&=&f^{-1}\left(\left\{ \left[\!\!
\begin{array}{c}
-x_{2}\\
x_{1}+x_{3}^{1/3}\\
\alpha^{3}
\end{array}\!\!\right]:\alpha\in\Real
\right\}\right)\\
&=&\left\{f^{-1}\left( \left[\!\!
\begin{array}{c}
-x_{2}\\
x_{1}+x_{3}^{1/3}\\
\alpha^{3}
\end{array}\!\!\right]\right):\alpha\in\Real
\right\}\\
&=&\left\{\left[\!\!
\begin{array}{c}
x_{1}+x_{3}^{1/3}+(x_{2}^{3}-\alpha^{3})^{1/3}\\
x_{2}\\
-x_{2}^{3}+\alpha^{3}
\end{array}\!\!\right]:\alpha\in\Real
\right\}
\end{eqnarray*}
Since $[x]_{-1}=\mu([x]_{0}^{-},\,\U)$ we can write
\begin{eqnarray*}
[x]_{-1}&=&\left\{\left[\!\!
\begin{array}{c}
x_{1}+x_{3}^{1/3}+(x_{2}^{3}-\alpha^{3})^{1/3}\\
x_{2}\\
\beta^{3}
\end{array}\!\!\right]:\alpha,\,\beta\in\Real
\right\}\\
&=&\left\{\left[\!\!
\begin{array}{c}
\alpha\\
x_{2}\\
\beta^{3}
\end{array}\!\!\right]:\alpha,\,\beta\in\Real
\right\}
\end{eqnarray*}
We can now construct $[x]_{-1}^{-}$ as
\begin{eqnarray*}
[x]_{-1}^{-}&=&f^{-1}([f(x)]_{-1})\\
&=&\left\{ f^{-1}\left(\left[\!\!
\begin{array}{c}
\alpha\\
x_{1}+x_{3}^{1/3}\\
\beta^{3}
\end{array}\!\!\right]\right):\alpha,\,\beta\in\Real
\right\}\\
&=&\left\{\left[\!\!
\begin{array}{c}
x_{1}+x_{3}^{1/3}-(\alpha^{3}+\beta^{3})^{1/3}\\
-\alpha\\
\alpha^{3}+\beta^{3}
\end{array}\!\!\right]:\alpha,\,\beta\in\Real
\right\}\\
&=&\left\{\left[\!\!
\begin{array}{c}
x_{1}+x_{3}^{1/3}-\beta\\
\alpha\\
\beta^{3}
\end{array}\!\!\right]:\alpha,\,\beta\in\Real
\right\}
\end{eqnarray*}
Now it is easy to see that $\mu([x]_{-1}^{-},\,\Real)=\Real^{3}$.
Therefore $[x]_{-2}=\mu([x]_{-1}^{-},\,\U)=\X$ and
Assumption~\ref{assume:X} is satisfied with $p=3$. Observe also
that the following intersection
\begin{eqnarray*}
[\xhat]_{0}\cap[x]_{-1}^{-}&=&
\left\{ \left[\!\!\begin{array}{c} \xhat_{1}\\
\xhat_{2}\\
\gamma^{3}
\end{array}\!\!\right]:\gamma\in\Real
\right\}\cap\left\{\left[\!\!
\begin{array}{c}
x_{1}+x_{3}^{1/3}-\beta\\
\alpha\\
\beta^{3}
\end{array}\!\!\right]:\alpha,\,\beta\in\Real
\right\}\\
&=&\left[\!\!\begin{array}{c} \xhat_{1}\\
\xhat_{2}\\
(x_{1}-\xhat_{1}+x_{3}^{1/3})^{3}
\end{array}\!\!\right]
\end{eqnarray*}
is singleton, which means that $[\xhat]_{0}\cap
[x]^{-}_{\pi(\xhat,\,x)}=[\xhat]_{0}\cap [x]^{-}_{-1}$ for all
$\xhat$ and $x$. The dynamics of the deadbeat observer then read
\begin{eqnarray*}
\xhat^{+}&=&f([\xhat]_{0}\cap [x]^{-}_{-1})\\
&=&\left[\!\!\begin{array}{c} -\xhat_{2}\\
x_{1}+x_{3}^{1/3}\\
\xhat_{2}^{3}+(x_{1}-\xhat_{1}+x_{3}^{1/3})^{3}
\end{array}\!\!\right]
\end{eqnarray*}
In particular, solving for $u$ in $\mu(\xhat,\,u)=[\xhat]_{0}\cap
[x]^{-}_{-1}$ readily yields
$u=((x_{1}-\xhat_{1}+x_{3}^{1/3})^{3}-\xhat_{3})^{1/3}$. Therefore
\begin{eqnarray*}
\kappa(\xhat,\,x):=((x_{1}-\xhat_{1}+x_{3}^{1/3})^{3}-\xhat_{3})^{1/3}
\end{eqnarray*}
is a deadbeat feedback law. Notice that
\begin{eqnarray*}
\kappa(\Delta_{\lambda}\xhat,\,\Delta_{\lambda}x)=\lambda\kappa(\xhat,\,x)\,.
\end{eqnarray*}
That is, the feedback law and hence the tracker are homogeneous
with respect to dilation $\Delta$.

\subsection{Positive system}

Consider system~\eqref{eqn:system} with
\begin{eqnarray*}
f(x):=\left[\!\!
\begin{array}{c}
x_{1}x_{2}x_{3}\\
x_{3}/x_{1}\\
\sqrt{x_{1}x_{2}}
\end{array}\!\!\right]\quad\mbox{and}\quad \mu(x,\,u):=\left[\!\!
\begin{array}{c}
x_{1}/u\\
x_{2}u^{2}\\
x_{3}/u
\end{array}\!\!\right]
\end{eqnarray*}
Let $\X=\Real_{>0}^{3}$ and $\U=\Real_{>0}$. We construct the
relevant sets $[x]_{-k}$ and $[x]_{-k}^{-}$ as follows. We begin
with $[x]_{0}$.
\begin{eqnarray*}
[x]_{0}=\left\{ \left[\!\!\begin{array}{c} x_{1}/\alpha\\
x_{2}\alpha^2\\
x_{3}/\alpha
\end{array}\!\!\right]:\alpha>0
\right\}
\end{eqnarray*}
Note that
\begin{eqnarray*}
f^{-1}(x)=\left[\!\!\begin{array}{c} x_{1}/(x_{2}x_{3}^{2})\\
x_{2}x_{3}^{4}/x_{1}\\
x_{1}/x_{3}^{2}
\end{array}\!\!\right]
\end{eqnarray*}
Therefore
\begin{eqnarray*}
[x]_{0}^{-}&=&f^{-1}([f(x)]_{0})\\
&=&f^{-1}\left(\left\{ \left[\!\!
\begin{array}{c}
x_{1}x_{2}x_{3}/\alpha\\
x_{3}\alpha^{2}/x_{1}\\
\sqrt{x_{1}x_{2}}/\alpha
\end{array}\!\!\right]:\alpha>0
\right\}\right)\\
&=&\left\{f^{-1}\left(\left[\!\!
\begin{array}{c}
x_{1}x_{2}x_{3}/\alpha\\
x_{3}\alpha^{2}/x_{1}\\
\sqrt{x_{1}x_{2}}/\alpha
\end{array}\!\!\right]\right):\alpha>0
\right\}\\
&=&\left\{\left[\!\!
\begin{array}{c}
x_{1}/\alpha\\
x_{2}/\alpha\\
x_{3}\alpha
\end{array}\!\!\right]:\alpha>0
\right\}\\
\end{eqnarray*}
Since $[x]_{-1}=\mu([x]_{0}^{-},\,\U)$ we can write
\begin{eqnarray*}
[x]_{-1}=\left\{\left[\!\!
\begin{array}{c}
x_{1}/(\alpha\beta)\\
x_{2}\beta^{2}/\alpha\\
x_{3}\alpha/\beta
\end{array}\!\!\right]:\alpha,\,\beta>0
\right\}
\end{eqnarray*}
We can now construct $[x]_{-1}^{-}$ as
\begin{eqnarray*}
[x]_{-1}^{-}&=&f^{-1}([f(x)]_{-1})\\
&=&\left\{f^{-1}\left(\left[\!\!
\begin{array}{c}
x_{1}x_{2}x_{3}/(\alpha\beta)\\
x_{3}\beta^{2}/(x_{1}\alpha)\\
\alpha\sqrt{x_{1}x_{2}}/\beta
\end{array}\!\!\right]\right):\alpha,\,\beta>0
\right\}\\
&=&\left\{\left[\!\!
\begin{array}{c}
x_{1}/(\alpha^{2}\beta)\\
x_{2}\alpha^{4}/\beta\\
x_{3}\beta/\alpha^{3}
\end{array}\!\!\right]:\alpha,\,\beta>0
\right\}\\
\end{eqnarray*}
Now, it can be shown that
$\mu([x]_{-1}^{-},\,\Real_{>0})=\Real_{>0}^{3}$. Therefore
$[x]_{-2}=\mu([x]_{-1}^{-},\,\U)=\X$ and Assumption~\ref{assume:X}
is satisfied with $p=3$. Observe also that the following
intersection
\begin{eqnarray*}
[\xhat]_{0}\cap[x]_{-1}^{-}&=& \left\{ \left[\!\!\begin{array}{c} \xhat_{1}/\gamma\\
\xhat_{2}\gamma^2\\
\xhat_{3}/\gamma
\end{array}\!\!\right]:\gamma>0
\right\}\cap\left\{\left[\!\!
\begin{array}{c}
x_{1}/(\alpha^{2}\beta)\\
x_{2}\alpha^{4}/\beta\\
x_{3}\beta/\alpha^{3}
\end{array}\!\!\right]:\alpha,\,\beta>0
\right\}\\
&=&\left[\!\!\begin{array}{c} \xhat_{1}^{4/3}\xhat_{2}^{5/3}\xhat_{3}^{2}/(x_{1}^{1/3}x_{2}^{5/3}x_{3}^{2})\\
x_{1}^{2/3}x_{2}^{10/3}x_{3}^{4}/(\xhat_{1}^{2/3}\xhat_{2}^{7/3}\xhat_{3}^{4})\\
\xhat_{1}^{1/3}\xhat_{2}^{5/3}\xhat_{3}^{3}/(x_{1}^{1/3}x_{2}^{5/3}x_{3}^{2})
\end{array}\!\!\right]
\end{eqnarray*}
is singleton, which means that $[\xhat]_{0}\cap
[x]^{-}_{\pi(\xhat,\,x)}=[\xhat]_{0}\cap [x]^{-}_{-1}$ for all
$\xhat$ and $x$. The dynamics of the deadbeat observer then read
\begin{eqnarray*}
\xhat^{+}&=&f([\xhat]_{0}\cap [x]^{-}_{-1})\\
&=&\left[\!\!\begin{array}{c} \xhat_{1}\xhat_{2}\xhat_{3}\\
\xhat_{3}/\xhat_{1}\\
\xhat_{1}^{1/3}x_{1}^{1/6}x_{2}^{5/6}x_{3}/(\xhat_{2}^{1/3}\xhat_{3})
\end{array}\!\!\right]
\end{eqnarray*}
In particular,
$u=x_{1}^{1/3}x_{2}^{5/3}x_{3}^{2}/(\xhat_{1}^{1/3}\xhat_{2}^{5/3}\xhat_{3}^{2})$
solves $\mu(\xhat,\,u)=[\xhat]_{0}\cap [x]^{-}_{-1}$. Therefore
\begin{eqnarray*}
\kappa(\xhat,\,x):=x_{1}^{1/3}x_{2}^{5/3}x_{3}^{2}/(\xhat_{1}^{1/3}\xhat_{2}^{5/3}\xhat_{3}^{2})
\end{eqnarray*}
is a deadbeat feedback law.

\section{An algorithm for deadbeat gain}\label{sec:alg}

In this section we provide an algorithm to compute the deadbeat
feedback gain for a linear system~\eqref{eqn:standard} with scalar
input. (The algorithm follows from Corollary~\ref{cor:foralg}.)
Namely, given a controllable pair $(A,\,B)$ with
$A\in\Real^{n\times n}$ and $B\in\Real^{n\times 1}$, we provide a
procedure to compute the gain $K\in\Real^{1\times n}$ that renders
matrix $A-BK$ nilpotent. Below we let $\nal(\cdot)$ be some
function such that, given matrix $M\in\Real^{m\times n}$ whose
dimension of null space is $k$, $\nal(M)$ is some $n\times k$
matrix whose columns span the null space of $M$.

\begin{algorithm}\label{alg:db}
Given $A\in\Real^{n\times n}$ and $B\in\Real^{n\times 1}$, the
following algorithm generates deadbeat gain $K\in\Real^{1\times
n}$.
\begin{eqnarray*}
&& X = B\\
&& \mbox{{\bf for}}\quad i = 1:n-2\\
&& \qquad X = [A^{-1}X\ \  B]\\
&& \mbox{{\bf end}}\\
&& K_{2}=\frac{\nal((A^{-1}X)^{T})^{T}}{\nal((A^{-1}X)^{T})^{T}B}\\
&&K = K_{2}A
\end{eqnarray*}
\end{algorithm}

\begin{remark}
Matrix $K_{2}$ appearing in Algorithm~\ref{alg:db} is the deadbeat
gain for system~\eqref{eqn:sensible}. That is, matrix
$A(I-BK_{2})$ is nilpotent.
\end{remark}
Recall that for any subspace $\setS$ we can write
$(A^{-1}\setS)^{\perp}=A^{T}\setS^{\perp}$. Therefore for the case
when $A$ is not invertible or when matrix inversion is costly one
can use the below {\em dual} algorithm.

\begin{algorithm}\label{alg:dbdual}
Given $A\in\Real^{n\times n}$ and $B\in\Real^{n\times 1}$, the
following algorithm generates deadbeat gain $K\in\Real^{1\times
n}$.
\begin{eqnarray*}
&& X_{\rm perp} = \nal(B^{T})\\
&& \mbox{{\bf for}}\quad i = 1:n-2\\
&& \qquad X_{\rm perp}=\nal([\nal((A^{T}X_{\rm perp})^{T})\ \ B]^{T})\\
&& \mbox{{\bf end}}\\
&& K_{2}=\frac{(A^{T}X_{\rm perp})^{T}}{(A^{T}X_{\rm perp})^{T}B}\\
&&K = K_{2}A
\end{eqnarray*}
\end{algorithm}
For the interested reader we below give M{\small ATLAB} codes.
Algorithm~\ref{alg:db} can be realized through the following
lines.
\begin{verbatim}
X = B;
for i = 1:n-2
    X = [A^(-1)*X B];
end
Ktwo = null((A^(-1)*X)')'/(null((A^(-1)*X)')'*B);
K = Ktwo*A;
\end{verbatim}
Likewise, Algorithm~\ref{alg:dbdual} can be coded as follows.
\begin{verbatim}
Xperp = null(B');
for i = 1:n-2
    Xperp = null([null((A'*Xperp)') B]');
end
Ktwo = (A'*Xperp)'/((A'*Xperp)'*B);
K = Ktwo*A;
\end{verbatim}

\section{Conclusion}

For nonlinear systems a method to construct a deadbeat tracker is
proposed. The resultant tracker can be considered as a
generalization of the linear deadbeat tracker. The construction
makes use of sets that are generated iteratively. Through such
iterations, deadbeat feedback laws are derived for two academic
examples. Also, for computing the deadbeat gain for a linear
system with scalar input, an algorithm and its dual are given.

\bibliographystyle{plain}
\bibliography{references}
\end{document}